\newtheorem{lemma}{Lemma}
\newtheorem{thm}[lemma]{Theorem}
\title{Equidissection of darts}
\author{Yusong Deng, Iwan Praton}
\date{}
\begin{document}
\maketitle
\abstract{We define the dart $D(a)$ to be the nonconvex quadrilateral whose vertices are $(0,1), (1,1), (1,0), (a,a)$ (in counterclockwise order), with $a>1$. Such a dart can be dissected into any even number of equal-area triangles. Here we investigate darts that can be dissected into an odd number of equal-area triangle.}

\section{Introduction}

An $m$-equidissection of a planar polygon $K$ is a dissection of $K$ into $m$ triangles of equal area. The spectrum $S(K)$ of $K$ is the set of integers $m$  for which an $m$-equidissection exists. In a pioneering paper, Monsky \cite{M} showed that if $K$ is a square, then $S(K)$ consists of all even positive integers and no odd ones. Such a striking result (and its even more striking proof) inspired others to investigate the spectrum of other polygons, e.g., \cite{KS}, \cite{J}, \cite{JM}. An interesting and thorough discussion of this topic can be found in chapter 5 of the book \cite{SS}.

Jepsen, Sedberry, and Hoyer \cite{JSH}  looked at the kite-shaped quadrilateral $Q(a)$ whose vertices are $(0, 0), (1, 0), (0, 1), (a, a)$ where $a > 1/2$. They observed that $S(Q(a))$ contains 2, and since a triangle can be dissected into any number of equal area triangles, $S(Q(a))$ contains all positive even integers. This leads to the main question of their paper: which odd integers, if any, are in $S(Q(a))$? They provide answers for certain values of $a$. 

Inspired by Penrose tilings, we note that the kites have companion polygons which we call darts. The dart $D(a)$ is a quadrilateral whose vertices are $(0,1), (1,1), (1,0), (a,a)$ (in counterclockwise order), with $a>1$.  Of course  $D(a)$ is not convex, but it still makes sense to define the spectrum of nonconvex polygons. In particular, $S(D(a))$ contains all positive even integers for the same reason that $S(Q(a))$ does, so we ask the same question as before: which odd integers are in $S(D(a))$?

\section{Results}

Our main tool is the same one used in \cite{JSH} (and earlier in \cite{KS} and \cite{SS}). We state it explicitly as a lemma.

\begin{lemma}
Suppose the polygon $K$ is dissected into triangles with areas $A_1,\ldots, A_m$, and these areas are in proportion
\[
A_1 : A_2:\cdots : A_m = t_1 : t_2 : \cdots : t_m,
\]
where $t_i$ is an integer ($1\leq i\leq m$). Then $K$ has a $t$-equidissection, where $t=t_1+\cdots + t_m$. 
\end{lemma}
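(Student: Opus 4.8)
The plan is to take the given dissection of $K$ into the triangles $T_1,\ldots,T_m$ with areas $A_1,\ldots,A_m$, and to refine it by cutting each $T_i$ into $t_i$ subtriangles of equal area; the key observation will be that \emph{all} of the resulting subtriangles, across every $T_i$, have the same area. To set this up, let $c$ be the common value of the ratios $A_i/t_i$. This is well-defined precisely because the hypothesis says $A_1:\cdots:A_m = t_1:\cdots:t_m$, i.e. $A_i = c\,t_i$ for a single constant $c$ (namely $c = (A_1+\cdots+A_m)/t = \operatorname{area}(K)/t$).

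Next I would describe the refinement of a single triangle. Given $T_i$, choose any vertex $v$ of $T_i$ and let $\ell$ be the opposite edge. Mark the $t_i-1$ points that divide $\ell$ into $t_i$ segments of equal length, and draw the segments (cevians) joining $v$ to each of these points. This cuts $T_i$ into $t_i$ triangles, each having one side a subsegment of $\ell$ of length $|\ell|/t_i$ and opposite vertex $v$; since these subtriangles share the apex $v$, they all have the same height relative to the line through $\ell$, and hence each has area $A_i/t_i = c$. Performing this construction inside every $T_i$ simultaneously produces a dissection of $K$ into $\sum_{i=1}^m t_i = t$ triangles, every one of which has area $c$, which is exactly a $t$-equidissection of $K$.

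There is essentially no obstacle in this argument; it is a matter of assembling the pieces correctly. The one point worth a sentence of care is that the refinements of distinct triangles do not interfere: all the new segments added inside $T_i$ lie in the interior of $T_i$ (apart from their endpoints on $\partial T_i$), so the refined pieces of $T_i$ and of $T_j$ overlap only along boundaries, and their union is still a legitimate dissection of $K$. With that noted, the proof is complete.
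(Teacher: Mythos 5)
Your proof is correct and follows exactly the paper's approach: subdivide each triangle $T_i$ into $t_i$ equal-area triangles via cevians from a vertex, then observe that all resulting pieces have the common area $A_i/t_i = \operatorname{area}(K)/t$. The paper states this in one sentence; your version simply spells out the construction and the (harmless) non-interference of the refinements.
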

\begin{proof}
We dissect the $i$th triangle into $t_i$ equal-area triangles, thus obtaining the required $t$-equidissection of $K$.
\end{proof}

The first result in \cite{JSH} states that $S(Q(r/2s))$ contains all integers of the form $r+2k$, where $k\geq 0$. We show that a similar result holds for darts $D(r/2s)$.

\begin{thm}
Let $a=r/(2s)$, where $r$ and $s$ are relatively prime positive integers with $r$ odd and $r>2s$. Then the integers $r+2k$ $(k\geq 0)$ are in $S(D(a))$. 
\end{thm}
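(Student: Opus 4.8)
The plan is to exhibit, for each $k\ge 0$, an explicit dissection of $D(a)$ into two or three triangles whose areas stand in the ratio of positive integers with sum $r+2k$, and then apply the Lemma. It is convenient to view $D(a)$ as the convex triangle $T$ with vertices $(0,1),(1,0),(a,a)$ with the ``notch'' triangle $N$ with vertices $(0,1),(1,0),(1,1)$ deleted; note $N\subseteq\{x\le 1\}\cap\{y\le 1\}$, that $T$ has area $a-\tfrac12=\frac{r-s}{2s}$, and that $D(a)$ has area $a-1=\frac{r-2s}{2s}$.

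For $k\ge 1$, set $\tau=\dfrac{2(s+k)}{r+2k}$ and let $P=\bigl(1+\tau(a-1),\ \tau a\bigr)$, the point of the edge $(1,0)$–$(a,a)$ at parameter $\tau$. Using $r>2s$ and $\gcd(r,2s)=1$ one checks that $\tfrac1a<\tau<1$ (so $P$ is interior to that edge) and, crucially, that $P_y=\tau a>1$. I would cut $D(a)$ along the two segments from $P$ to $(0,1)$ and from $P$ to $(1,1)$; this produces the three triangles
\[
\Delta_1=(a,a),P,(0,1),\qquad \Delta_2=(1,1),(1,0),P,\qquad \Delta_3=(1,1),P,(0,1),
\]
whose areas (by the shoelace formula, most cleanly $\tfrac12(2a-1)(1-\tau)$, $\tfrac12\tau(a-1)$, $\tfrac12(\tau a-1)$, then with $a=r/(2s)$ and the chosen $\tau$ substituted) come out to
\[
\frac{(r-s)(r-2s)}{2s(r+2k)},\qquad \frac{(s+k)(r-2s)}{2s(r+2k)},\qquad \frac{k(r-2s)}{2s(r+2k)}.
\]
These sum to $a-1$ and are in the proportion $(r-s):(s+k):k$ of positive integers adding to $r+2k$, so the Lemma (subdivide $\Delta_1,\Delta_2,\Delta_3$ into $r-s$, $s+k$, $k$ equal triangles) yields $r+2k\in S(D(a))$. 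The case $k=0$ is the degenerate limit $\tau=1/a$: now $P=(2-\tfrac1a,1)$ lies on the line $y=1$, $\Delta_3$ collapses, and the construction reduces to the single cut of $D(a)$ along $y=1$, splitting it into $\Delta_1=(0,1),(2-\tfrac1a,1),(a,a)$ (the part with $y\ge 1$) and $\Delta_2=(1,1),(1,0),(2-\tfrac1a,1)$ (the part with $y\le 1$), with areas $\frac{(2a-1)(a-1)}{2a}$ and $\frac{a-1}{2a}$, i.e.\ in the proportion $(2a-1):1=(r-s):s$ with sum $r$; the Lemma gives $r\in S(D(a))$.

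The arithmetic---the area identities, the integrality of $r-s,\ s+k,\ k$, and the bounds $\tfrac1a<\tau<1$---is routine and uses only $r>2s$ and $\gcd(r,2s)=1$ (the latter enters, for instance, in solving $n_2 r-n_3(r-2s)=s(r+2k)$ for integers $n_i$). The one point requiring care, precisely because $D(a)$ is nonconvex, is verifying that these are honest dissections: that the two cutting segments lie inside $D(a)$ and that $\Delta_1,\Delta_2,\Delta_3$ have disjoint interiors covering $D(a)$. This is where $P_y>1$ does the work. The triangles $\Delta_1$ and $\Delta_3$ have all their vertices on or above the line $y=1$, hence lie in $\{y\ge 1\}$ and miss the interior of $N$; the triangle $\Delta_2$ has all its vertices on or to the right of $x=1$, hence lies in $\{x\ge 1\}$ and also misses $N$; and all three sit inside the convex triangle $T$, so all three lie in $D(a)$. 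Combined with the observation that the segment $(0,1)$–$P$ (which one checks lies in $D(a)$ since it stays in $\{y\ge 1\}$ and in $T$) splits $D(a)$ into the triangle $\Delta_1$ and a simple quadrilateral $(0,1),P,(1,0),(1,1)$ whose only reflex vertex is $(1,1)$---so that the diagonal $(1,1)$–$P$ lies inside it and cuts it into $\Delta_2$ and $\Delta_3$---this confirms the dissection. I expect this nonconvexity bookkeeping, rather than any of the algebra, to be the main thing to get right.
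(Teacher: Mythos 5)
Your proof is correct and is essentially the same as the paper's: you use the identical three-triangle dissection (a point on the edge from $(1,0)$ to $(a,a)$ joined to $(0,1)$ and to the reflex vertex $(1,1)$), and your ratios $(r-s):(s+k):k$ are exactly the paper's $(t-r)/2:(t-r+2s)/2:(r-s)$ with $t=r+2k$, just parametrized by $k$ instead of by $t$ and the coordinate $q$. Your explicit treatment of the degenerate case $k=0$ and of why nonconvexity causes no trouble is more careful than the paper's, but it is the same argument.
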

\begin{proof}
Dissect $D(a)$ into three triangles as shown in figure 1, left. This partition is adapted from \cite{JSH}.

\begin{figure}[h]
\centering
\begin{subfigure}[b]{0.5\textwidth}
\centering
\begin{tikzpicture}[scale=1.8]
\draw[thick] (1,1)--(1,0)--(7/4,7/4)--(0,1)--(1,1);
\draw [red] (0,1)--(1.58,1.35)--(1,1);
\node at (2,1.3){$(p,q)$}; \node at (2.1,7/4){$(a,a)$};
\node at (1,1.12){ 1}; \node at (1.2,0.8){2};
\node at (1.32,1.42){3};
\end{tikzpicture}
\end{subfigure}
\begin{subfigure}[b]{0.4\textwidth}
\centering
\begin{tikzpicture}[scale=1.8]
\draw[thick] (1,1)--(1,0)--(7/4,7/4)--(0,1)--(1,1);
\draw [red](1.2,0.44)--(0.87,1.36);
\node at (1.5,0.4){$(p,q)$};
\node at (1.1,0.5){1}; \node at (0.6,1.15){2};
\node at (1.3,1.3){3};
\end{tikzpicture}
\end{subfigure}
\caption{Two partitions of $D(a)$}
\end{figure}
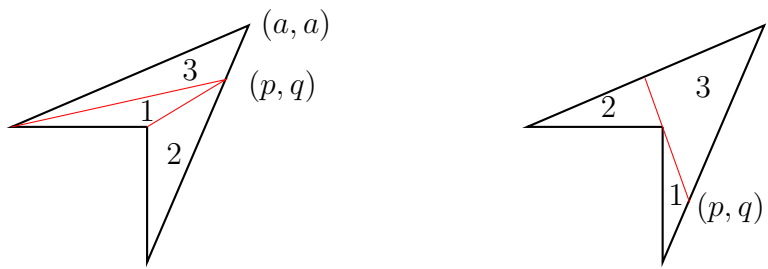

We choose $q=r (t-r-2s)/2st$ and $p=q+(r-2s)/t$ where $t$ is an odd integer and $t\geq r$. It is straightforward to check that (i) $(p,q)$ lies on the indicated line; (ii) $q\geq 1$; (iii) $a>p$. Thus it is possible to partition $D(a)$ into 3 triangles as shown in the figure; the non-convexity is not an issue. It is also straightforward to calculate that area 1 is $(r-2s)(t-r)/(4st)$, area 2 is $(r-2s)(t-r+2s)/(4st)$, and area 3 is $(2r-2s)(r-s)/(4st)$. Therefore these areas are in the proportion $(t-r) : (t-r+2s) : 2(r-s)$. Since $r$ and $t$ are both odd, we can simplify the proportions to $(t-r)/2 : (t-r+2s)/2 : (r-s)$. These sum up to $t$, so by Lemma 1, $D(a)$ has a $t$-equidissection.
\end{proof}

A natural question arises: in the situation of Theorem 1, does $D(a)$ contain an odd number smaller than $r$? For $Q(a)$, \cite{JSH} showed that the answer is yes when $r$ is not a prime number. It doesn't seem straightforward to adapt their method to our situation. But we can take advantage of the nonconvexity of $D(a)$ to get a similar result.

\begin{thm}
Let $a$ be as in the previous theorem, and suppose $s$ is even. Then $S(D(a))$ contains odd numbers less than $r$.
\end{thm}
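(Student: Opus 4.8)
The plan is to exploit the non-convexity of $D(a)$ by cutting it into two pieces whose spectra we already understand, and then splicing equidissections together. Concretely, the segment from $(1,1)$ to $(a,a)$ lies inside $D(a)$ (since $a>1$), and it divides the dart into two triangles: the triangle $T_1$ with vertices $(0,1),(1,1),(a,a)$ and the triangle $T_2$ with vertices $(1,1),(1,0),(a,a)$. A quick computation gives $\operatorname{area}(T_1)=(a-1)/2$ and $\operatorname{area}(T_2)=(a-1)/2$ as well — wait, one must check this, but in any case both areas are rational multiples of each other with small denominators once $a=r/2s$ is substituted, say in proportion $u:v$ with $u+v$ having a useful factorization. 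Since any triangle can be $n$-equidissected for every $n\ge 1$, Lemma 1 applied to this two-triangle dissection shows that $S(D(a))$ contains every integer of the form $ju+kv$... more usefully, it contains every $m$ that can be split as $m=m_1+m_2$ with $m_1/m_2=u/v$.

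First I would compute the two areas exactly in terms of $r$ and $s$. With $a=r/2s$, the whole dart $D(a)$ has area that I will call $\mathcal A$; the diagonal from $(1,1)$ to $(a,a)$ splits it into triangles of areas proportional to some pair of integers depending on $r,s$. The hypothesis that $s$ is even should be exactly what makes the relevant sum $u+v$ even, or more precisely makes it possible to write $r = u+v$ (or a proper divisor-controlled combination) as a sum realizing a proportion — I expect the point is that when $s$ is even, $r-2s$ is odd, $r-s$ is... and the proportion of the two triangle areas reduces to something like $1:(r-2)$ or $(r-2s):2s$ whose parts sum to a proper divisor of $r$ or allow a sub-$r$ odd total. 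The key arithmetic step is to identify, using $s$ even, a splitting $m = m_1+m_2$ with $m$ odd, $m<r$, and $m_1:m_2$ equal to the ratio of the two triangle areas.

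Then the construction is immediate: dissect $T_1$ into $m_1$ equal-area triangles and $T_2$ into $m_2$ equal-area triangles; by the area computation these $m$ triangles all have equal area, giving an $m$-equidissection of $D(a)$ with $m$ odd and $m<r$. If a single diagonal does not by itself give a small enough odd total, the fallback is to first cut $T_2$ (say) along a cevian into two triangles with a controlled rational area ratio — exactly the kind of "two partitions" hinted at in Figure 1, right — producing a three-triangle dissection of $D(a)$ with area proportion $(t_1:t_2:t_3)$ summing to an odd number below $r$, and again invoke Lemma 1.

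The main obstacle I anticipate is the arithmetic bookkeeping: showing that when $s$ is even one can always realize an \emph{odd} total strictly less than $r$ as a nonnegative integer combination matching the forced area ratio, for \emph{every} admissible pair $(r,s)$ with $r>2s$ and $\gcd(r,s)=1$. In particular one must rule out parity clashes (the forced ratio might itself have an even sum, forcing the total to be an even multiple of something) and handle small cases like $s=2$ separately. The geometric side — that the cuts actually lie inside the nonconvex $D(a)$ and that the pieces are genuine triangles — is routine once the coordinates of any interior cut point are checked to satisfy the analogues of conditions (i)–(iii) in the proof of Theorem 1.
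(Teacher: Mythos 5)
There is a genuine gap here: your proposal is a plan rather than a proof, and its primary construction cannot work. The diagonal from $(1,1)$ to $(a,a)$ does lie inside $D(a)$ and does split it into two triangles, but a direct computation (which you flagged but did not do) shows both triangles have area exactly $(a-1)/2$. The forced proportion is therefore $1:1$, so any equidissection built on this two-piece decomposition must have $m_1=m_2$ and hence an \emph{even} total --- precisely the parity clash you worried about at the end. Everything then rests on your ``fallback'' three-triangle dissection, but for that you supply no specific cut, no area ratios, no use of the hypothesis that $s$ is even, and no identification of which odd number below $r$ is produced; you explicitly defer ``the arithmetic bookkeeping,'' which is the entire content of the theorem.

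For comparison, the paper's proof chooses a single chord through the reflex vertex $(1,1)$, namely the line $x+y=2$, meeting the two long edges of the dart at $(p,q)$ and $(q,p)$ with $p=(3r-4s)/(2r-2s)$, $q=r/(2r-2s)$. This cuts $D(a)$ into three triangles whose areas work out to $(r-2s)/(4r-4s)$, $(r-2s)/(4r-4s)$, and $(r-2s)(2r-4s)/((4r-4s)s)$, i.e., in proportion $s:s:2r-4s$. The hypothesis that $s$ is even is used exactly once: it lets one halve the proportion to $\tfrac{s}{2}:\tfrac{s}{2}:r-2s$, whose sum $r-s$ is odd (as $r$ is odd) and less than $r$, so Lemma 1 gives an $(r-s)$-equidissection. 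Your instinct to exploit the reflex vertex and to aim for a three-triangle dissection with integer area ratios is the right one, but without exhibiting such a cut and verifying the resulting odd sum, the argument is not complete.
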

\begin{proof}
We partition $D(a)$ into 3 triangles as shown in figure 1, right; for this theorem we choose the central line so that its equation is $x+y=2$. This implies that $p=(3r-4s)/(2r-2s)$, $q=r/(2r-2s)$. Area 1 and area 2 are each equal to $(r-2s)/(4r-4s)$, and area 3 is $r/(2s)-1-(r-2s)/2r-2s)=(r-2s)(2r-4s)/((4r-4s)s)$. Thus the three areas are in the proportion 
\[
s : s : 2r-4s, \text{ i.e., } \frac{s}2 : \frac{s}2 : r-2s
\]
since $s$ is even. These sum up to $r-s$; by Lemma 1, $S(D(a))$ contains the odd integer $r-s<r$. 
\end{proof}

It's natural to wonder if the same result holds for odd values of $s$. It does not hold for all  $D(r/(2s))$ with $s$ odd, since $D(3/2)$ cannot be $t$-equidissected if $t$ is odd and smaller than 3. But it does hold for some $D(r/(2s))$.

\begin{thm}
Let $a=r/(2s)$ be as in theorem 1, and suppose $s$ is odd and $r-s$ has an odd factor larger than $r-2s$. Then $S(D(a))$ contains odd integers less than $r$.
\end{thm}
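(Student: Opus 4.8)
Fix an odd divisor $d$ of $r-s$ with $d>r-2s$ (it exists by hypothesis); since $r-s$ is even and $d$ is odd, $d<r-s<r$. Put $N:=2(r-s)-d$. Then $N$ is odd, being an even number minus an odd one, and since $d>r-2s$,
\[
N=2(r-s)-d<2(r-s)-(r-2s)=r ,
\]
so it will suffice to produce an $N$-equidissection of $D(a)$.

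The plan is to use the three-triangle partition of Figure 1, right, exactly as in the previous theorem, but with a \emph{non-perpendicular} central line through the reflex vertex $(1,1)$, namely
\[
(d-r+2s)\,x+(r-d)\,y=2s .
\]
(For $s$ even the central line was forced by the symmetry of the dart to be $x+y=2$; that symmetric choice now yields a proportion summing to the even number $2(r-s)$, so an asymmetric line is genuinely needed, and finding the right one is the heart of the matter.) First I would check that this line passes through $(1,1)$, meets the edge from $(1,0)$ to $(a,a)$ and the edge from $(a,a)$ to $(0,1)$ at interior points, and has $(1,1)$ strictly between those two points — here the standing inequality $r>2s$ and the hypothesis $d>r-2s$ enter, the latter just as it did above. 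Then, determining the two intersection points and computing the three triangle areas as in the preceding proof, one finds the areas are in the ratio
\[
(r-d)\,d \;:\; (d-r+2s)\,N \;:\; 2(r-2s)(r-s) .
\]

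Dividing this ratio by $2d$ gives
\[
\frac{r-d}{2}\;:\;\frac{(d-r+2s)\,N}{2d}\;:\;\frac{(r-2s)(r-s)}{d} ,
\]
and I would verify that these three numbers are positive integers: $(r-d)/2$ is an integer because $r$ and $d$ are both odd; $d\mid N$ since $d\mid r-s$ (hence $d\mid 2(r-s)$ and $d\mid N$) and $d-(r-2s)$ is even, so the middle entry equals $\tfrac12\bigl(d-(r-2s)\bigr)\cdot\tfrac{N}{d}$; and the last entry is an integer since $d\mid r-s$. Substituting $N=2(r-s)-d$ then shows that these integers sum to $N$, whence Lemma 1 produces an $N$-equidissection of $D(a)$, as required. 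The only genuinely non-routine step is guessing the central line above; once it is in hand, the area computation and the divisibility checks are mechanical, and the sole delicate point is confirming that the three-triangle configuration is non-degenerate.
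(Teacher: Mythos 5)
Your proof is correct and is essentially the paper's own argument: your cutting line $(d-r+2s)x+(r-d)y=2s$ through the reflex vertex is (up to the dart's symmetry about $y=x$, which merely swaps triangles 1 and 2) exactly the line the paper specifies via $q=r(t-r+2s)/(2st)$ with $t=d$, and your area ratio, the reduction by $2d$, the divisibility checks, and the final count $N=2(r-s)-d<r$ all match the paper's computation. The only differences are cosmetic: the paper writes $r-s=ct$ and reduces the ratio in two steps (dividing by $t$, then by $2$) instead of appealing to $d\mid r-s$ directly.
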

\begin{proof}
Let $r-s=ct$ where $t$ is odd and $t>r-2s$. We partition $D(a)$ into three triangles as shown in figure 1, right. This time we choose $q=r(t-r+2s)/(2st)$. Then $p=q+(r-2s)/t$, $p'=r(r-t)/(2s(2r-2s-t))$ and $q'=p'+(r-2s)/(2r-2s-t)$ (after some calculation). Therefore
\begin{itemize}
\item
area 1 is $\frac12 (p-1) = \frac12 \ \frac{r-2s}{2s}\ \frac{t-r+2s}{t}$;
\item
 area 2 is $\frac12 (q'-1)=\frac12 \ \frac{r-2s}{2s} \ \frac{r-t}{2r-2s-t}$;
 \item
area 3 is $a-1-\text{area 1} - \text{area 2} = \frac12 \ \frac{r-2s}{2s}\ \frac{(2r-2s)(r-2s)}{t(2r-2s-t)}$.
\end{itemize}
We conclude that
\[
\text{area 1} : \text{area 2} : \text{area 3} = (t-r+2s)((2r-2s-t) : t(r-t) : (2r-2s)(r-2s)
\]
which simplifies to
\begin{align*}
t(3r-4s-t)&-(2r-2s)(r-2s) : t(r-t) : (2r-2s)(r-2s) \\
&= (3r-4s-t)-2c(r-2s) : (r-t) : 2c(r-2s).
\end{align*}
Now $r$ and $t$ are odd, so the ratios are as the integers $(3r-4s-t)/2$, $(r-t)/2$, and $c(r-2s)$. These add up to $2(r-s)-t$. By Lemma 1, $2(r-s)-t$ is in $S(D(a))$. Since $t>r-2s$, we have $r-s-t<s$, so  $2(r-s)-t= r-s + r-s-t<r$. Thus $S(D(a))$ contains an odd integer smaller than $r$. 
\end{proof}

For example, if $a=11/10$, then $r-s= 6$ and  $t=3$. So $S(D(a))$ contains $2(r-s)-t=9$ as well as $11$.

A bit more generally, if $r$ and $s$ are odd positive integers, with $2s<r<3s$ and $r\not\equiv s \bmod 4$, then $S(D(r/(2s)))$ contains odd integers smaller than $r$. 

We now turn our attention to the case where $a$ is a quadratic irrational. Since our results here turn out to be also applicable to kites, we first use an affine transformation to change our darts into ``non-convex kites''. We slightly generalize the definition of  $Q(a)$ to be the quadrilateral with vertices $(0,0)$, $(1,0)$, $(0,1)$, and $(a,a)$ where $a$ does not have to be larger than $1/2$; recall that a kite is  $Q(a)$ with $a>1/2$. The transformation
\[
(x,y)\mapsto \frac1{2a-1} \big((a-1)x+a(1-y), (a-1)y+a(1-x)\big)
\]
takes the dart $D(a)$ into $Q(a')$, where $a'=(a-1)/(2a-1)<1/2$. So $Q(a')$ is not a kite; it is more dart-like. Any equidissection of $Q(a')$ has an equivalent equidissection of $D(a)$.

From now on our darts are of the form $Q(a)$ with $a<1/2$. Jepsen, Sedberry, and Hoyer \cite{JSH} showed that there are infinitely many radicals $a>1/2$ (such as $\sqrt{3}/2$, $\sqrt{5}/4$, $\sqrt{21}/4$, etc) so that the kite $Q(a)$ has an odd equidissection. We show a similar result for darts. In fact, we show this for all odd radicals.

\begin{thm}
For $k\geq 1$, let $a=\sqrt{2k+1}/(4k+2)$. Then $S(Q(a))$ contains an odd integer.
\end{thm}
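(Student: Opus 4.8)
The plan is to apply Lemma 1, exactly as in Theorems 1--3. Write $n=2k+1$; then $a=\sqrt{2k+1}/(4k+2)=1/(2\sqrt n)$, so that $a^{2}=1/(4n)$ is rational, and $\operatorname{area}(Q(a))=a$. Throughout I take the vertices of the dart to be $O=(0,0)$, $B=(1,0)$, $C=(a,a)$, $A=(0,1)$ in counterclockwise order, with $C$ the reflex vertex, and I look for a dissection into triangles $T_{1},\dots,T_{m}$ whose areas are in an integer ratio $t_{1}:\dots:t_{m}$ with $t_{1}+\dots+t_{m}$ odd.

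Two observations should guide the search. First, since $\operatorname{area}(Q(a))=a$ is a rational multiple of $a$, any dissection to which Lemma 1 applies must have \emph{every} triangle of area a rational multiple of $a$: if $A_{i}=r_{i}A_{1}$ with $r_{i}\in\mathbb Q$, then $A_{1}\sum r_{i}=a$ forces $A_{1}\in\mathbb Q a$, hence all $A_{i}\in\mathbb Q a$. Second, the diagonal $OC$ divides $Q(a)$ into two triangles of equal area $a/2$; if the dissection refines this division, the weights $t_{i}$ on the two sides each total half of $\sum t_{i}$, so $\sum t_{i}$ is even. Hence at least one triangle of the dissection must meet the interior of the segment $OC$, and, because $Q(a)$ is pinched at $C$, it must cross $OC$ strictly between $O$ and $C$, with one vertex on each side of the line $y=x$.

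The key step is to produce such a dissection explicitly. I would assemble it from triangles with a side along the $x$-axis or the $y$-axis together with triangles having $C$ as a vertex: a triangle with one leg a rational length along an axis and the other leg a rational multiple of $a$ has area a rational multiple of $a$, and a triangle at $C$ whose two edges out of $C$ have the form $(\text{rational},\ \text{rational}\cdot a)$ likewise has area a rational multiple of $a$, since the cross term uses $a^{2}=1/(4n)$. Concretely, I would cut a corner triangle off at $O$ along a segment meeting $OC$ between $O$ and $C$ (so this triangle straddles the diagonal), similar corner triangles at $B$ and at $A$, and then fan‑triangulate the polygon that remains around $C$ --- choosing all cut parameters rational and the heights of the $y$-axis cut points to be rational multiples of $a$, so that every resulting area lies in $\mathbb Q a$. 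Reading off the ratio $t_{1}:\dots:t_{m}$ from these areas, checking (this is where the arithmetic of $n$ enters) that $\sum t_{i}$ is odd, and invoking Lemma 1 then completes the proof.

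The main obstacle is the construction just sketched. The parity observation kills the naive attempts: a dissection into only three triangles cannot have all areas in $\mathbb Q a$, and the natural four‑triangle dissections are forced to be symmetric about $y=x$, which makes $\sum t_{i}$ even; so one genuinely needs enough triangles to break the symmetry while keeping every area in $\mathbb Q a$. Moreover one must verify that every cut segment stays inside the non‑convex dart --- that none of them passes through the notch at $C$ --- and it is precisely here that $k\ge 1$ (equivalently $n\ge 3$) and the exact value $a=1/(2\sqrt n)$ get used. Once the dissection and its inclusions are in hand, the remaining area bookkeeping and the appeal to Lemma 1 are routine.
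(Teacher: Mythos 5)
Your framework is correct---apply Lemma 1 to a dissection whose areas lie in $\mathbb{Q}a$ and are in integer proportion with odd sum---and your two structural observations (every area must be a rational multiple of $a$, and the dissection cannot refine the diagonal through $(a,a)$, so some triangle must straddle the line $y=x$) are sound and genuinely narrow the search. But the argument stops exactly where the content of the theorem begins: no explicit dissection is produced, no areas are computed, and the sum of the integer weights is never shown to be odd. ``Choosing all cut parameters rational \ldots\ checking that $\sum t_i$ is odd'' is the entire difficulty. There is no a priori reason that a dissection of the kind you sketch exists with odd total weight---your own parity argument shows how easily such constructions collapse to an even sum---and whether the sum comes out odd depends delicately on the arithmetic of $n=2k+1$. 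As written, this is a plan for a proof, not a proof.

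For comparison, the paper realizes the plan with a five-triangle zig-zag dissection: with $m=2k+1$ it cuts at $(0,b)$ and $(0,c)$ on the $y$-axis and at a point $(d,e)$ on the edge from $(1,0)$ to $(a,a)$, taking $b=\sqrt{m}/m^2$, $c=1/m$, $d=m(1+\sqrt{m})/((m-1)(2m-1))$. The resulting areas are $a/m$, $a(m^2-3m+1)/(m(2m-1))$, $a/(2m-1)$, $ak/(m(2m-1))$, $ak/m$, i.e.\ in proportion $(2m-1):(m^2-3m+1):m:k:k(2m-1)$, whose sum $8k^2+6k+1$ is odd; one also checks $c>b$, $a<d<1$, and $c<a/(1-a)$ so that the cuts stay inside the dart. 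Note that the first triangle $(0,0),(1,0),(0,b)$ does straddle the line $y=x$, exactly as your parity constraint predicts. To complete your argument you would need to supply comparably explicit data and carry out this verification.
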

\begin{proof}
Partition $Q(a)$ into five triangles as shown in figure 2, left. It is similar to the zig-zag partitions in \cite{KS} (figure 1) and in \cite{JSH} (figure 2).

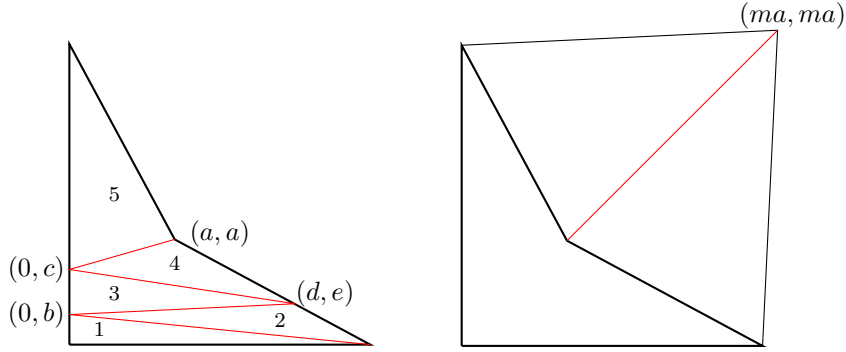
\begin{figure}[h]
\centering
\begin{subfigure}[b]{0.5\textwidth}
\centering
\begin{tikzpicture}[scale=4]
\draw [thick] (0,0)--(1,0)--(0.35,0.35)--(0,1)--(0,0);
\draw[red] (1,0)--(0,0.1)--(0.75,0.136)--(0,0.25)--(0.35,0.35);
\node at (-0.11,0.1){\footnotesize $(0,b)$};
\node at (-0.11,0.25){\footnotesize $(0,c)$};
\node at (0.85,0.17){\footnotesize $(d,e)$};
\node at (0.5,0.37){\footnotesize $(a,a)$};
\node at (0.1,0.05){\scriptsize $1$};
\node at (0.7,0.08){\scriptsize $2$};
\node at (0.15,0.17){\scriptsize $3$};
\node at (0.35,0.27){\scriptsize $4$};
\node at (0.15,0.5){\scriptsize $5$};
\end{tikzpicture}
\end{subfigure}
\begin{subfigure}[b]{0.4\textwidth}
\centering
\begin{tikzpicture}[scale=4]
\draw [thick] (0,0)--(1,0)--(0.35,0.35)--(0,1)--(0,0);
\draw (1,0)--(1.05,1.05)--(0,1);
\node at (1.1,1.1){\footnotesize $(ma,ma)$};
\draw[red] (0.35,0.35) -- (1.05,1.05);
\end{tikzpicture}
\end{subfigure}
\caption{Partitions of a dart and a kite}
\end{figure}

Set $m=2k+1$ and choose $b=\sqrt{m}/m^2$, $c=1/m$, and $d=m(1+\sqrt{m})/((m-1)(2m-1))$. We check that $c>b$, $a<d<1$, and $c<a/(1-a)$, implying that the dart can be partitioned as shown in the figure. Then
\begin{itemize}
\item
area 1 is  $\frac{b}{2} = \frac{a}{m}$
\item
area 2 is $\frac{(1-d)}{2(1-a)} (a-b+ab) = \frac{a(m^2-3m+1)}{m(2m-1)}$, 
\item
area 3 is $\frac{(c-b)d}{2}=\frac{a}{2m-1}$,
\item area 4 is $\frac{(d-a)}{2(1-a)}(a-c+ac)=\frac{ak}{m(2m-1)}$,
\item
area 5 is $\frac{a(1-c)}{2}=\frac{ak}{m}$
\end{itemize}

Thus the ratios of these areas are as
\[
2m-1 : (m^2-3m+1 : m : k : k(2m-1),
\]
whose sum is $8k^2+6k+1$, an odd integer. By Lemma 1, $Q(a)$ has an odd equidissection.
\end{proof}

It turns out that this result also applies to kites.

\begin{thm}
Suppose $m>1$ is an odd integer. Then the spectrum of the kite $Q(\sqrt{m}/2)$ contains an odd integer. 
\end{thm}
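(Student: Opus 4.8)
The plan is to realize the kite $Q(\sqrt{m}/2)$ as the dart treated in the preceding theorem with two triangles glued onto it, and then to invoke Lemma 1. Write $m=2k+1$ with $k\ge 1$ and put $a=\sqrt{m}/(2m)=\sqrt{2k+1}/(4k+2)$, which is exactly the parameter of the preceding theorem. The key observation is the identity $\sqrt{m}/2=m\cdot a$: thus the kite in question is $Q(ma)$, its far vertex is $(ma,ma)$, and the reflex vertex $(a,a)$ of the dart $Q(a)$ lies on the diagonal segment from $(0,0)$ to $(ma,ma)$. This is the configuration shown in figure 2, right.

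First I would record the geometric decomposition. Since $a<\tfrac12<ma$, the dart $Q(a)$ sits inside the kite $Q(ma)$, and
\[
Q(ma)=Q(a)\cup T_1\cup T_2,
\]
a union with pairwise disjoint interiors, where $T_1$ has vertices $(1,0),(a,a),(ma,ma)$ and $T_2$ has vertices $(0,1),(a,a),(ma,ma)$. To see this, write both polygons as unions of two triangles split along $y=x$: the dart as $(0,0),(1,0),(a,a)$ together with $(0,0),(a,a),(0,1)$, and the kite as $(0,0),(1,0),(ma,ma)$ together with $(0,0),(0,1),(ma,ma)$; subtracting the former from the latter produces exactly $T_1$ and $T_2$. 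By the symmetry across $y=x$, $\operatorname{area}(T_1)=\operatorname{area}(T_2)=\tfrac12(m-1)a=ka$.

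Next I would graft this onto the five-triangle dissection of $Q(a)$ from the proof of the preceding theorem, in which the triangles have areas $\tfrac{a}{m}$, $\tfrac{a(m^2-3m+1)}{m(2m-1)}$, $\tfrac{a}{2m-1}$, $\tfrac{ak}{m(2m-1)}$, $\tfrac{ak}{m}$, i.e.\ in the proportion $(2m-1):(m^2-3m+1):m:k:k(2m-1)$. Adjoining $T_1$ and $T_2$, each of area $ka$, the seven areas are in the proportion
\[
(2m-1):(m^2-3m+1):m:k:k(2m-1):km(2m-1):km(2m-1).
\]
All seven entries are positive integers (using $m\ge 3$), and a short computation shows that their sum equals $m^2(4k+1)$, which is odd since $m$ is odd. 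By Lemma 1, $Q(\sqrt{m}/2)=Q(ma)$ has an $m^2(4k+1)$-equidissection, which proves the theorem.

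I do not expect a genuine obstacle: once the identity $\sqrt{m}/2=m\cdot a$ is spotted, the argument reduces to a geometric decomposition plus an arithmetic check. The step requiring the most care is verifying the decomposition $Q(ma)=Q(a)\cup T_1\cup T_2$ — i.e.\ checking that the non-convex dart really sits inside the kite with $T_1\cup T_2$ filling precisely the complementary region and no overlap — together with the bookkeeping that the seven ratios above are honest integers summing to an odd number.
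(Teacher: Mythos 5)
Your proposal is correct and uses essentially the same construction as the paper: the decomposition $Q(ma)=Q(a)\cup T_1\cup T_2$ of figure 2, right, with the two flanking triangles of area $ka$ each. The only difference is bookkeeping — the paper invokes Theorem 5 as a black box (the dart has a $u$-equidissection with $u$ odd) and cuts each $T_i$ into $u(m-1)/2$ pieces of area $a/u$ to get $um$ triangles, whereas you unroll the explicit five-part ratio and apply Lemma 1 once to a seven-triangle proportion; both yield the same odd count $um=m^2(2m-1)=m^2(4k+1)$.
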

For $m=3$, this is theorem 3 in \cite{JSH}.
\begin{proof}
Let $a=\sqrt{m}/(2m)$ as in theorem 5. Then $Q(\sqrt{m}/2)=Q(ma)$. We partition $Q(ma)$ as shown in figure 2, right. By theorem 5, the dart $Q(a)$ has a $u$-equidissection, where $u$ is odd.  Each large triangle in the figure has area $(m-1)a/2= u(m-1)/2 \cdot a/u$, and so can be dissected into $u(m-1)/2$ triangles, each with area $a/u$. Thus the whole kite can be dissected into $u+u(m-1)=um$ equal-area triangles, and of course $um$ is odd. 
\end{proof}

\section*{Acknowledgement}
The results in this article are part of an undergraduate honors thesis by Yusong Deng, under the direction of Iwan Praton.

\end{document}